\newcommand{\la}{\lambda}
\newcommand{\ity}{\infty}
\newcommand{\N}{\mathbb{N}}
\numberwithin{equation}{section}
\newtheorem{theorem}{Theorem}[section]
\newtheorem{lemma}[theorem]{Lemma}
\newtheorem{proposition}[theorem]{Proposition}
\newtheorem{remark}[theorem]{Remark}
\newtheorem{example}[theorem]{Example}
\newtheorem{definition}[theorem]{Definition}
\thanks {The research work of the first author is supported by ANRF(SERB) research grant TAR/2023/000197}
\begin{document}

\title[dynamics of bungee set of composite entire function]{results on dynamics of bungee set of composite entire functions in the eremenko-lyubich class}

\author[D. Kumar]{Dinesh Kumar}
\address{Department of Mathematics, Birla Institute of Technology Mesra, Ranchi, 
Jharkhand--835 215, India}
\email{dineshkumar@bitmesra.ac.in }

\author[S. Das]{Soumyajeet Das}
\address{Department of Physics, Birla Institute of Technology Mesra, Ranchi, 
Jharkhand--835 215, India}
\email{soumyajeetdas39@gmail.com }



\begin{abstract}
In this paper, we have discussed the dynamics of composite entire functions in terms of relationship between bungee set, escaping set and filled-in Julia set. We have established some relation between the dynamics of  composition of entire functions and the functions taken for composition. We have shown that the union of the bungee set of two entire functions contains the bungee set of the composite function. In addition, it is shown that the filled-in Julia  set of composite entire functions contains the filled-in Julia set of functions used for the composition. The results have been illustrated with several examples. We have mostly dealt with  permutable(commuting) functions.
\end{abstract}

\keywords{bungee set, wandering domain, Baker domain, repelling periodic point, rationally indifferent fixed point, escaping set, asymptotic value, singular set}

\subjclass[2020] {37F10,  30D05}

\maketitle

\section{Introduction}
When we iterate a holomorphic function $ f:\mathbb{C}  \rightarrow \mathbb{C} $, 
$n-$times, we observe that the points of the complex plane move differently, which we call the dynamics of the point. Here,  $f^n$  denotes the $n-$th iteration of  function  $f$, where $n \in \mathbb{N}$. As, not all  points have same dynamics, we therefore have different classifications of points of the complex plane $\mathbb{C}$; one classification is based on the nature of neighbourhood of a point $z_0$ (say); they are said to be in the Fatou set $F(f)$  (set of points whose neighbourhood  exhibits stable behaviour) and Julia set $J(f)$ (set of points whose neighbourhood exhibits chaotic behaviour). This was  broadly the classification we generally see when we talk about dynamics of points in complex plane. 

There is yet another classification of these points based on the nature of their orbits; viz.,  the filled-in  Julia set $K(f)$ (set of points whose orbits are bounded), escaping set $I(f)$ (set of points whose orbits tend to infinity), and bungee set $BU(f)$ (the orbits of such points neither remain bounded nor tend to infinity; only there exists a subsequence that remains bounded and another one that is unbounded). These sets  are disjoint among themselves and their union $BU(f)\cup I(f) \cup K(f)=\mathbb{C}$.  In \cite{osb2}, it was shown that if a Fatou component intersects with $BU(f)$ then that component becomes a wandering domain contained in $BU(f)$. 
It is to be noted that the bungee set and escaping set must have a non empty intersection with $J(f)$ (see \cite{e1, osb2}). The escaping set too has a non empty intersection with the Fatou set when the Fatou component is a Baker domain \cite{d1}, or when the Fatou set has a multiply connected wandering domain \cite{rs1}. Knowing all these sets for an entire function, we were curious about what happens when we compose two such functions and how these sets relate to the composition with those of individual functions. 
As composition of two  transcendental functions is again transcendental, we have tried to analyse what happens to the dynamics of transcendental functions under compositions.
\\ 
Simplifying the situation further, we have taken transcendental functions $f ,g$  in the Eremenko-Lyubich class $\mathcal{B}$ which means that the functions of this class have a bounded set of singular values (i.e., the set of all  asymptotic values and critical values and their finite limit points is bounded). Recall the notion of an oscillatory wandering domain.
\begin{definition}
 Given a transcendental entire function $f$, a domain $U\subset F(f)$ is called an oscillatory wandering domain of $f$, if  $U\subseteq BU(f)$, that is, $\{\infty, d\}$ is a limit function of $\{f^n\}$ on $U$ for some $d\in \mathbb{C}$, \cite{marti2020wandering}.
\end{definition}
 Additionally, class $\mathcal{B}$ may have an intersection of the Fatou set with the bungee set, in other words there can be oscillating wandering domain which also lies inside the bungee set for $f\in\mathcal{B}.$ 
As our aim was to establish a relationship between different sets of functions and their compositions,  it would be very difficult to establish any such relation in the absence of some algebraic relation among these functions. Hence, we have focused on permutable (commuting) functions, that is, functions $f, g$ satisfying $f \circ g =g \circ f.$ Also, we have considered entire functions $f$ and $g$ related by $g = f+ C$ where $C$ is the period of $f$. 
The following results are proven in this paper:\\
Let $f,g \in \mathcal{B}$ be commuting functions. The bungee set of composed function  is contained in the union of the bungee set of functions used for composition. 
We know some results for exponential functions which belong to Speiser class $\mathcal{S}$ (such class of functions has finite number of elements in their singular set) \cite{sch}. Also, $\mathcal{S} \subset \mathcal{B}$. It was shown in \cite{baker2} that for $f= e^{\lambda z}, \la\neq 0$, a function $g$ that commutes  with $f$ is some finite iteration of $f$ itself, that is, $g=f^n$ for some $n\in\N$. Hence, 
in this case, the result clearly holds, in fact, we obtain $BU(f)=BU(g)=BU(f\circ g).$

\subsection{Observations}
While developing the relationships among the sets of composite functions and the individual functions, we also found
that the bungee set and escaping set exhibit similar characteristics for functions of class $\mathcal{B}$:
\begin{enumerate}
\item The closure of both these sets equals Julia set.
\item Both sets are neither open nor closed \eqref{Bug_nop_nclo}.
\item Both sets of composite function is contained in union of individual functions (to be proved later on).
\end{enumerate}

\section{Main results}

\textbf{Property A} : $f$ (respectively $g$) has the property that it sends orbits escaping under $g$ (respectively, $f$) to $\ity.$\\

To begin with the commuting entire functions, we first understand the nature of these sets when functions satisfies \textbf{Property A}.

\begin{theorem} \label{Complete_invar_esp}
If $f$ and $g$ are commuting functions satisfying \textbf{Property A}, then $I(f)$ is completely invariant under $g$ and vice versa. 
\end{theorem}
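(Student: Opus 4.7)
The plan is to unpack complete invariance of $I(f)$ under $g$ as the two inclusions $g(I(f))\subseteq I(f)$ and $g^{-1}(I(f))\subseteq I(f)$, then handle each one using the commutation relation $f\circ g=g\circ f$ (which iterates to $f^n\circ g=g\circ f^n$ for every $n\in\N$) together with Property A, which I read as: whenever $(w_n)$ is a sequence tending to $\infty$ that arises as an orbit under one of the two functions, applying the other sends it to $\infty$ as well. The ``vice versa'' half of the theorem will follow from an identical argument with the roles of $f$ and $g$ reversed, so I will focus on the invariance of $I(f)$ under $g$.

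For the forward inclusion $g(I(f))\subseteq I(f)$, I would take $z\in I(f)$, so $f^n(z)\to\infty$. The sequence $(f^n(z))_{n\in\N}$ is then an orbit under $f$ that escapes to infinity. Applying $g$ termwise and invoking Property A gives $g(f^n(z))\to\infty$. Now commutativity rewrites this as $g(f^n(z))=f^n(g(z))$, so $f^n(g(z))\to\infty$, i.e.\ $g(z)\in I(f)$. This is the step where Property A is doing the real work.

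For the backward inclusion $g^{-1}(I(f))\subseteq I(f)$, I would take $z$ with $g(z)\in I(f)$, i.e.\ $f^n(g(z))\to\infty$, and use commutativity in the opposite direction to write $f^n(g(z))=g(f^n(z))$. Thus $g(f^n(z))\to\infty$. I then argue by contradiction: if $z\notin I(f)$, the orbit $(f^n(z))$ would admit a subsequence $(f^{n_k}(z))$ lying in some closed disk $\overline{\D(0,R)}$. Since $g$ is entire, it is bounded on that compact disk, so $(g(f^{n_k}(z)))$ would be bounded, contradicting $g(f^n(z))\to\infty$. Hence $f^n(z)\to\infty$ and $z\in I(f)$.

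The main obstacle, and the place where care is needed, is parsing Property A correctly: one has to accept that ``$g$ sends the escaping $f$-orbit of $z$ to $\infty$'' refers to the termwise-applied sequence $g(f^n(z))$ rather than to any iterated composition, since otherwise the forward direction cannot be matched to $f^n(g(z))$ via commutativity. Once that reading is fixed, the forward direction is essentially a one-line commutation plus Property A, while the backward direction does not require Property A at all and instead uses only continuity of the entire function $g$ and the local boundedness that entirety provides.
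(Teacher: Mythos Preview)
Your proof is correct and follows essentially the same route as the paper: the forward inclusion uses Property~A plus commutativity exactly as you do, and the backward inclusion, which the paper dismisses as ``easy to see,'' is handled by your continuity/local-boundedness argument. Your observation that Property~A is needed only for the forward direction is accurate and sharper than the paper's presentation.
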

\begin{proof}
Suppose $z_{0} \in I(f).$
Then, $|f^{n}(z_{0})| \rightarrow \infty$.
Now $|g(f^{n}(z_{0})|$ can either tend to $\infty$ or be bounded. Using  \textbf{Property A}, $|g(f^{n}(z_{0})| \rightarrow \infty $.
As $f$ and $g$ are commuting so, $|g(f^{n}(z_{0}))|$ = $|f^{n}(g(z_{0}))| \rightarrow \infty.$
This implies that  $I(f)$ is forward invariant under $g$.
It is easy to see that $g^{-1}(I(f))\subset I(f)$. 
Hence,  $I(f)$ is completely invariant under $g$. As $f$ and $g$ are commuting, their roles can be interchanged and hence, $I(g)$ is completely invariant under $f$ as well. 
\end{proof}

\begin{remark}
It follows from above result that   $I(f)$ and $I(g)$ are completely invariant under  $f \circ g$ as well.
\end{remark}
%

\begin{theorem} \label{K_BU_no_asymp} 
Suppose $f$ and $g$ are commuting entire functions satisfying \textbf{Property A}. Then $K(f)$ and   $BU(f)$ are completely invariant under $g$ and vice versa.
\end{theorem}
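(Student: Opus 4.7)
The plan is to leverage Theorem \ref{Complete_invar_esp} (which gives complete invariance of $I(f)$ under $g$) together with direct arguments based on the commutation relation $f\circ g = g\circ f$. Note that since $K(f)$, $I(f)$, and $BU(f)$ partition $\mathbb{C}$, it suffices to establish complete invariance of $K(f)$ under $g$; complete invariance of $BU(f) = \mathbb{C}\setminus (I(f)\cup K(f))$ is then automatic, and the symmetric statements for $g$ (invariance of $K(g)$ and $BU(g)$ under $f$) follow by interchanging the roles of $f$ and $g$.

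For forward invariance of $K(f)$ under $g$, take $z_0 \in K(f)$ so that the orbit $\{f^n(z_0)\}_{n\ge 0}$ lies in some closed disk $\overline{D(0,R)}$. Since $g$ is entire, hence continuous, the image $g(\overline{D(0,R)})$ is compact, in particular bounded by some $R'>0$. By commutativity, $f^n(g(z_0)) = g(f^n(z_0)) \in g(\overline{D(0,R)})$ for every $n$, so $\{f^n(g(z_0))\}$ is bounded and $g(z_0)\in K(f)$.

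For backward invariance, suppose $g(z_0)\in K(f)$ so that $\{f^n(g(z_0))\}$ is bounded; I want $z_0 \in K(f)$. Assume toward contradiction that $z_0 \notin K(f)$. Then either $z_0\in I(f)$ or $z_0 \in BU(f)$. In the first case, $|f^n(z_0)|\to\infty$, and Property A for $g$ forces $|g(f^n(z_0))|\to\infty$; but $g(f^n(z_0)) = f^n(g(z_0))$ stays bounded, a contradiction. In the second case, there is a subsequence $\{n_k\}$ with $|f^{n_k}(z_0)|\to\infty$; applying Property A along this subsequence (the natural reading of the hypothesis, since $g$ sends the $f$-iterates escaping to infinity to infinity) gives $|g(f^{n_k}(z_0))| = |f^{n_k}(g(z_0))| \to\infty$, again contradicting boundedness of $\{f^n(g(z_0))\}$. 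Hence $z_0 \in K(f)$, so $g^{-1}(K(f)) \subseteq K(f)$.

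The main obstacle, and the step that deserves the most care, is the $BU$ subcase of backward invariance for $K(f)$: Property A is phrased in terms of entire $f$-orbits escaping to infinity, whereas in $BU(f)$ only a subsequence of the orbit escapes. I would justify this step by interpreting (or strengthening) Property A to say that $g$ carries any subsequence of an $f$-orbit tending to infinity to a sequence tending to infinity — effectively asserting that $g$ has no finite asymptotic limit along $f$-iterates. With that subtlety addressed, everything else is a routine consequence of commutativity and the partition $\mathbb{C} = I(f)\cup K(f) \cup BU(f)$.
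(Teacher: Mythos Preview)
Your proposal is correct and follows essentially the same route as the paper: forward invariance of $K(f)$ via continuity of $g$ plus commutation, backward invariance via (the contrapositive of) Property~A, and then $BU(f)$ from the partition $\mathbb{C}=I(f)\cup K(f)\cup BU(f)$ together with Theorem~\ref{Complete_invar_esp}. The only cosmetic difference is that the paper handles backward invariance in one line---from $|g(f^n(z_0))|<L$ it directly infers $|f^n(z_0)|<L_1$ by Property~A---rather than splitting into your $I(f)$/$BU(f)$ cases; the subsequence issue you flag is present in both arguments and is resolved by the same reading of Property~A that you propose.
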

\begin{proof}
$K(f)$ is in general forward invariant under $g,$ because if we take a point $w\in K(f)$, then we have $|f^{n}(w)|< R$ for some $R.$ It is clear that $|g(f ^{n}(w))|=|f ^{n}(g(w))|< R_1$ for some $R_1$  which implies that $g(w)\in K(f)$ that proves the result. Now, let $z_0 \in g^{-1}(K(f)).$ This implies that  $|f^{n}(g(z_0))| < L$ for some $L$ and because of commutation, $|g(f^{n}(z_0))|< L.$ Using  \textbf{Property A},  $|f^{n}(z_0)|<L_1$ for some $L_1$, which means $z_0 \in K(f)$. Thus, we obtain that $K(f)$ is completely invariant under $g$. Similarly, $K(g)$ is also completely invariant under $f$ as well.
\end{proof}

\begin{theorem} \label{K_BU_no_asymp1} 
Suppose $f$ and $g$ are commuting entire functions satisfying \textbf{Property A}. Then  $BU(f)$ is completely invariant under $g$ and vice versa.
\end{theorem}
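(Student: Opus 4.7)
The plan is to deduce complete invariance of $BU(f)$ under $g$ directly from the partition $\mathbb{C} = I(f) \sqcup K(f) \sqcup BU(f)$ together with the complete invariance of $I(f)$ (Theorem \ref{Complete_invar_esp}) and $K(f)$ (Theorem \ref{K_BU_no_asymp}) under $g$. The key observation is that since $BU(f)$ is the complement of $I(f) \cup K(f)$ in $\mathbb{C}$, and complete invariance of a set under a mapping is equivalent to complete invariance of its complement (for well-defined entire maps), the conclusion should be immediate.

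First I would establish forward invariance. Take any $w \in BU(f)$ and consider $g(w)$. By the trichotomy, $g(w)$ lies in exactly one of $I(f)$, $K(f)$, $BU(f)$. If $g(w) \in I(f)$, then backward invariance of $I(f)$ under $g$ (from Theorem \ref{Complete_invar_esp}, which gives $g^{-1}(I(f)) \subseteq I(f)$) would place $w$ in $I(f)$, contradicting $w \in BU(f)$. Similarly, if $g(w) \in K(f)$, then backward invariance of $K(f)$ under $g$ (from Theorem \ref{K_BU_no_asymp}, which gives $g^{-1}(K(f)) \subseteq K(f)$) would force $w \in K(f)$, again a contradiction. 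Thus $g(w) \in BU(f)$, proving $g(BU(f)) \subseteq BU(f)$.

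Next I would prove backward invariance. Let $z_0 \in g^{-1}(BU(f))$, so $g(z_0) \in BU(f)$. If $z_0 \in I(f)$, then forward invariance of $I(f)$ under $g$ gives $g(z_0) \in I(f)$, a contradiction; if $z_0 \in K(f)$, then forward invariance of $K(f)$ under $g$ gives $g(z_0) \in K(f)$, again a contradiction. Hence $z_0 \in BU(f)$, proving $g^{-1}(BU(f)) \subseteq BU(f)$. Combined with forward invariance, this yields complete invariance of $BU(f)$ under $g$, and by symmetry (interchanging the roles of $f$ and $g$, which is legitimate since they commute and both satisfy Property A), $BU(g)$ is completely invariant under $f$.

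The main obstacle is essentially a bookkeeping one, namely recognizing that nothing new has to be proved beyond invoking the two preceding theorems together with the disjointness of the three dynamical sets. One minor point worth making explicit in the write-up is that Property A is not invoked again here directly, it has already been absorbed into Theorems \ref{Complete_invar_esp} and \ref{K_BU_no_asymp}, which is precisely why the argument collapses to a clean complement-of-a-partition argument.
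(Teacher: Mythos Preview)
Your proof is correct and follows essentially the same approach as the paper: invoke the partition $\mathbb{C}=I(f)\sqcup K(f)\sqcup BU(f)$ and use the complete invariance of $I(f)$ and $K(f)$ under $g$ from Theorems \ref{Complete_invar_esp} and \ref{K_BU_no_asymp} to conclude the same for their complement $BU(f)$. The paper's version simply states this in one line, whereas you spell out the forward and backward invariance explicitly, but the argument is identical.
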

\begin{proof}
Using the dynamical partition of the complex plane, we obtain $BU(f)= \mathbb{C} \backslash (I(f))\cup K(f)).$ We know from preceding argument that $K(f)$ is completely invariant under $g$ and from Theorem \ref{Complete_invar_esp}, $I(f)$ is completely invariant under $g.$ Hence, on combining these results, we  conclude that $BU(f)$ is also completely invariant under $g$. On similar lines, $BU(g)$ will  also be completely invariant under $f$. 
\end{proof}

\begin{remark}
It follows from above result that   $I(f),$ $I(g); K(f), K(g);$ and $BU(f), BU(g)$ are completely invariant under  $f \circ g$ as well.
\end{remark}

%

The next result provides a relation between the filled-in Julia set of composite entire functions and the filled-in Julia set of the individual functions.
\begin{lemma}\label{lem2.3}
The filled-in Julia set of the composite function $f\circ g$ contains the intersection of the filled-in Julia set of the individual functions which are commuting  and satisfying \textbf{Property A}.
\end{lemma}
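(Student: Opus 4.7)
Let $z_{0} \in K(f) \cap K(g)$; the goal is to show that the $(f \circ g)$-orbit of $z_{0}$ is bounded. The first move is to exploit the commutation of $f$ and $g$ to rewrite the composite iterates in the diagonal form $(f \circ g)^{n}(z_{0}) = f^{n}(g^{n}(z_{0})) = g^{n}(f^{n}(z_{0}))$. Since $z_{0} \in K(g)$, the sequence $w_{n} := g^{n}(z_{0})$ is bounded, say $|w_{n}| \le R$ for all $n \in \N$; symmetrically, $v_{n} := f^{n}(z_{0})$ is bounded because $z_{0} \in K(f)$.

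Next, I would apply Theorem \ref{K_BU_no_asymp} and the remark following Theorem \ref{K_BU_no_asymp1} to transfer membership across iterations: $K(f)$ is completely invariant under $g$, so each $w_{n} \in K(f)$, and $K(g)$ is completely invariant under $f$, so each $v_{n} \in K(g)$. This yields two descriptions of the composite orbit, $f^{n}(w_{n})$ and $g^{n}(v_{n})$, with $w_{n}, v_{n}$ bounded sequences drawn from $K(f)$ and $K(g)$ respectively.

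The third step is a contradiction argument. Suppose a subsequence satisfies $|(f \circ g)^{n_{k}}(z_{0})| \to \infty$. Because $\{w_{n_{k}}\}$ and $\{v_{n_{k}}\}$ lie in compact sets, pass to a further subsequence along which $w_{n_{k}} \to w^{*}$ and $v_{n_{k}} \to v^{*}$. The idea is then to invoke \textbf{Property A} on the escaping diagonal: $f^{n_{k}}(w_{n_{k}}) \to \ity$ is supposed to force either the $f$-orbit of some bounded point or the $g$-orbit of some bounded point to escape, which contradicts $w_{n_{k}} \in K(f)$, $v_{n_{k}} \in K(g)$, together with the closure/continuity properties enjoyed by functions in $\mathcal{B}$.

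The main obstacle is exactly this last step. The hypothesis $w_{n} \in K(f)$ gives, for each fixed $n$, that $\{f^{k}(w_{n})\}_{k}$ is bounded with a bound depending on $n$; it does not automatically produce the uniform diagonal bound $\sup_{n}|f^{n}(w_{n})| < \ity$ one needs. Bridging this gap requires combining Property A, which rules out escaping orbits of one function starting from bounded points of the other, with a compactness or normal-family argument on $\{w_{n}\}$ and $\{v_{n}\}$; I expect this is where the real work lies and where the hypothesis $f,g \in \mathcal{B}$ together with Property A is essential.
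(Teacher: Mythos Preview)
Your setup is sound: the diagonal identity $(f\circ g)^{n}(z_{0})=f^{n}(g^{n}(z_{0}))=g^{n}(f^{n}(z_{0}))$, the boundedness of $w_{n}=g^{n}(z_{0})$ and $v_{n}=f^{n}(z_{0})$, and the observation (via Theorem~\ref{K_BU_no_asymp}) that $w_{n}\in K(f)$ and $v_{n}\in K(g)$ for every $n$ are all correct and relevant. But the gap you flag in your last paragraph is real, and the compactness move you sketch does not close it. Passing to a subsequence with $w_{n_{k}}\to w^{*}$ only gives $w^{*}\in\overline{K(f)}$, and even if $w^{*}\in K(f)$ the bound on $\{f^{m}(w^{*})\}_{m}$ says nothing about the diagonal values $f^{n_{k}}(w_{n_{k}})$: the family $\{f^{n_{k}}\}$ is not equicontinuous at $w^{*}$ unless $w^{*}\in F(f)$, and \textbf{Property~A} as formulated concerns genuine escaping orbits under one map, not a single diagonal blow-up of the form $f^{n_{k}}(w_{n_{k}})\to\infty$. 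So the contradiction you hope for does not emerge from these ingredients alone; the proposal is incomplete at exactly the point you identify.

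The paper avoids the diagonal entirely and works with the full doubly indexed array $\{f^{j}(g^{k}(z_{0}))\}_{j,k}$. Each row $\{f^{j}(g^{k}(z_{0}))\}_{j}$ is bounded because $g^{k}(z_{0})\in K(f)$; the paper takes a limit (point) $l$ of the first row and uses commutation together with continuity of $g$ to produce $g^{k-1}(l)$ as a limit (point) of row $k$. The argument is then a case split on the $g$-orbit $X=\{l,g(l),g^{2}(l),\dots\}$: if $l$ is $g$-periodic, or if $X$ is infinite but bounded, the conclusion $z_{0}\in K(f\circ g)$ is drawn directly; if $X$ were unbounded one would get $z_{0}\in I(f\circ g)\subset I(f)\cup I(g)$ by the inclusion from \cite{d1}, contradicting $z_{0}\in K(f)\cap K(g)$. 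Thus the paper's substitute for the uniform-bound/normal-family step you were missing is the known escaping-set inclusion $I(f\circ g)\subset I(f)\cup I(g)$, applied after reorganising the problem around the $g$-orbit of a single limit point of the $f$-iterates rather than around the diagonal sequence.
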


\begin{proof}
Suppose that $z_0\in K(f)\cap K(g)$. Consider the array of  following sequences:\\
$\begin{array}{ccccc}
f(g(z_0)) &f^2(g(z_0))&f^3(g(z_0))\cdots\\
f(g^2(z_0))&f^2(g^2(z_0))&f^3(g^2(z_0))\cdots\\
\vdots&\vdots&\vdots\\
f(g^k(z_0))&f^2(g^k(z_0))&f^3(g^k(z_0))\cdots\\
\vdots&\vdots&\vdots

\end{array}$\\
Now, the first row of the array will converge to say, $l$ for, if it does not, then $g(z_0)\in I(f)$ which implies that $z_0\in I(f)$, a contradiction. Using continuity of $g$, we can say that second row will converge to $g(l)$ and so on. Let $X$ be the set containing all these limit points. The following are the possible cases for set $X$.\\
Case 1: $l$ is periodic point of $g$. In particular, suppose that $l$ is a fixed point of $g$. Hence, every sequence of the array will be bounded which shows that $z_0\in K(f\circ g)$.\\
Case 2: $X$ is infinite and bounded. Then, by Bolzano-Weierstrass theorem, $X$ has a limit point which also shows that $z_0\in K(f\circ g)$.\\
Case 3: $X$ is unbounded, so that there exists a subsequence which  escapes to infinity. This in turn implies that $z_0\in I(f\circ g)\subset I(f) \cup I(g)$ (see \cite{d1}) which leads to  a contradiction.\\
From all the above cases, we obtain that  $K(f)\cap K(g)\subset K(f\circ g).$
\end{proof}
 
The next result connects bungee set of composite entire function with that of individual functions.
\begin{theorem}
Suppose $f$ and $g$ are commuting transcendental entire functions  satisfying \textbf{Property A}. Then, $BU(f \circ g)\subset BU(f)\cap BU(g).$
\end{theorem}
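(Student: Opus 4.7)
My plan is to argue by contrapositive, leveraging the complete-invariance package proved immediately above. By the symmetric roles of $f$ and $g$ in the hypotheses, it is enough to establish $BU(f\circ g)\subset BU(f)$; the inclusion $BU(f\circ g)\subset BU(g)$ follows by interchanging $f$ and $g$, and the stated conclusion is their intersection. So I would show that $z_0\in I(f)\cup K(f)$ forces $z_0\in I(f\circ g)\cup K(f\circ g)$, contradicting any membership of $z_0$ in $BU(f\circ g)$. The remark following Theorem \ref{K_BU_no_asymp1} already tells us that $I(f),K(f),BU(f)$ are each completely invariant under $f\circ g$, so the $(f\circ g)$-orbit of $z_0$ remains inside whichever of the three sets contains $z_0$, which will let me freely combine information gleaned along different sub-indices.

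From commutativity, $(f\circ g)^n(z_0)=f^n(g^n(z_0))=g^n(f^n(z_0))$, and I would set up the doubly indexed array $\{f^i(g^j(z_0))\}_{i,j\geq 0}$ in the spirit of the proof of Lemma \ref{lem2.3}. If $z_0\in K(f)\cap K(g)$, Lemma \ref{lem2.3} immediately gives $z_0\in K(f\circ g)$. If instead $z_0\in K(f)\cap I(g)$, then $\{f^n(z_0)\}$ is bounded and lies inside $I(g)$ (which is forward $f$-invariant by the symmetric form of Property A), so applying Property A to the $g$-escaping orbit at each $f^n(z_0)$ forces $|g^n(f^n(z_0))|\to\infty$, placing $z_0\in I(f\circ g)$. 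The remaining sub-case $z_0\in K(f)\cap BU(g)$ splits in the same style, using the bounded subsequence of the $g$-orbit to feed Lemma \ref{lem2.3} and the unbounded subsequence to feed Property A. Finally, if $z_0\in I(f)$, then $|f^n(z_0)|\to\infty$, and repeated use of Property A together with commutation yields $|f^n(g^k(z_0))|\to\infty$ for every fixed $k\geq 0$; combining this with the parallel one-sided information in the $g$-direction forces the diagonal $(f\circ g)^n(z_0)$ to escape and hence $z_0\in I(f\circ g)$.

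The main obstacle is the diagonal control in the last step: each row of the array $\{f^i(g^j(z_0))\}$ escapes as $i\to\infty$ for fixed $j$, but the diagonal $(f\circ g)^n(z_0)=f^n(g^n(z_0))$ requires simultaneous growth in both indices, and naive iteration of Property A only secures row-wise escape. I would close this gap by invoking the Eremenko--Lyubich fast-escape feature of class $\mathcal{B}$, which ensures that $I(f)$-orbits grow quickly enough to dominate the perturbation of the starting point induced by the commuting iterates of $g$ on $I(f)$. Once diagonal escape or boundedness is confirmed in every case, the contradiction with $z_0\in BU(f\circ g)$ is immediate, and the theorem follows.
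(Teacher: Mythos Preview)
Your contrapositive route is genuinely different from the paper's. The paper does not use the array $\{f^i(g^j(z_0))\}$ or Lemma~\ref{lem2.3} here at all. Instead it starts from $z_0\in BU(f\circ g)$, fixes at the outset the two subsequences $\{n_k\}$ and $\{m_k\}$ witnessing this (so $(f\circ g)^{n_k}(z_0)\to\infty$ and $|(f\circ g)^{m_k}(z_0)|<M$), and then case-splits on the behaviour of $g^{n_k}(z_0)$ and $g^{m_k}(z_0)$. If (after reduction) $z_0\in I(g)$, invariance under $f$ gives $f^{m_k}(z_0)\in I(g)$ and the contradiction is read off along the $m_k$-indices; if $z_0\in K(g)$, invariance gives $f^{n_k}(z_0)\in K(g)$ and the contradiction comes along the $n_k$-indices. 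The point is that the paper never attempts the stronger implications $z_0\in I(f)\Rightarrow z_0\in I(f\circ g)$ or $z_0\in K(f)\Rightarrow z_0\in K(f\circ g)$ that you set out to prove; it only has to defeat $BU(f\circ g)$-membership along the already-given subsequences, which sidesteps most of the diagonal issue you identify.

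Your proposal has two concrete gaps. First, the sub-case $z_0\in K(f)\cap BU(g)$ is not handled: Lemma~\ref{lem2.3} requires $z_0\in K(f)\cap K(g)$, so you cannot ``feed the bounded subsequence of the $g$-orbit to Lemma~\ref{lem2.3}'' when $z_0\notin K(g)$; and producing one bounded sub-outcome and one unbounded sub-outcome along different subsequences gives you neither $z_0\in K(f\circ g)$ nor $z_0\in I(f\circ g)$ --- it gives exactly the $BU(f\circ g)$ behaviour you are trying to rule out. Second, your proposed fix for the diagonal escape invokes the Eremenko--Lyubich fast-escape mechanism of class~$\mathcal{B}$, but the theorem as stated assumes only commutation and Property~A, with no class~$\mathcal{B}$ hypothesis (the class~$\mathcal{B}$ material sits in the next section). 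So the patch lies outside the stated hypotheses, and without it the $z_0\in I(f)$ case of your argument is incomplete.
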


\begin{proof}


 Suppose $z_0\in BU(f\circ g).$ Then,  there exists two subsequences $\{n_k\},\{m_k\}$ and a constant $M>0$ such that $(f\circ g)^{n_k}(z_0)\to\infty$ as $k\to\infty$ and $|(f\circ g)^{m_k}(z_0)|<M \mbox{ for all } k\in \mathbb{N}$. It is enough to show that $z_0\in BU(g)$ (as the proof of $z_0\in BU(f)$ follows on similar lines). In order to show  this, we prove that $z_0$ cannot be in $I(g)$ as well as $K(g).$ We divide the proof into several cases:\\
Case(i): Suppose that $g^{n_k}(z_0)\to\infty$ and $g^{m_k}(z_0)\to\infty$ as $k\to\infty$.\\
We can assume that $g^n(z_0)\to\infty$ as $n\to\infty$
(for, if there exists a subsequence $\{l_k\}$ such 
that $g^{l_k}(z_0)$ stays bounded as $k\to\infty$ then 
this will clearly imply that $z_0\in BU(g)$). Also, using Theorem \ref{Complete_invar_esp}, $f^k(I(g)\subset I(g)$ for every  $k\in\mathbb{N}$. In particular, $f^{m_k}(z_0)\in I(g)$ which implies that $(g\circ f)^{m_k}(z_0)\to\infty $ as $k\to\infty$ which leads to a  contradiction. \\
Case(ii): Suppose that $g^{n_k}(z_0)\to b$ and $g^{m_k}(z_0)\to\infty$ as $k\to\infty$. This again shows that $z_0\in BU(g)$ and the result follows.\\
Case(3): Suppose that $g^{n_k}(z_0)\to \alpha $ and $g^{m_k}(z_0)\to \beta$ as $k\to\infty.$   It follows that $z_0\in K(g)$ (for,  if there exists a subsequence $\{l_k\}$ for 
which $g^{l_k}(z_0)\to\infty$ as $k\to\infty$ then we are back to Case(2) 
 and hence $z_0\in BU(g)$). Using Theorem \ref{Complete_invar_esp},  $f(K(g)\subset K(g)$. In particular, $f^{n_k}(z_0)\in K(g)$, i.e., there exists $L>0$ such that $|g^n(f^{n_k}(z_0))|<L \mbox{ for all } n\in\mathbb{N}$. This implies that $|(f\circ g)^{n_k}(z_0)|<L \mbox{ for all } k\in\mathbb{N}$ which is again a contradiction. Thus, from the above cases, we conclude that $z_0\in BU(g).$  The proof of $z_0\in BU(f)$ follows on similar lines. 
Hence, we obtain that $BU(f\circ g)\subset BU(f)\cap BU(g)\subset BU(f)\cup BU(g)$.
\end{proof}
Given the preceding proof, we ask a question:\\
\textit{\textbf{Question:}}  What can we say about points of the set $K(f) \cup K(g)\setminus K(f)\cap K(g),$ where  will it be situated?\\
  Let $w_0 \in K(f) \cup K(g) \ \backslash \ K(f) \cap K(g)$. These points won't intersect with $I(f \circ g)$ (as $I(f \circ g) \subset I(f) \cup I(g)$ and $K(f)\cup K(g) \cap I(f) \cup I(g)=\emptyset$ \cite{d1}) nor with $BU(f \circ g)$ (as $BU(f \circ g)\subset BU(f)\cup BU(g)$ and $BU(f \circ g) \cap K(f) \cup K(g)= \emptyset)$ So, they will intersect with $K(f \circ g).$    

\noindent The following examples illustrate the results established above.

\begin{example}
Consider $f(z)=1+z+e^{-z}$(Fatou map) and  $g(z)=1+z+e^{-z}+ 2\pi i.$ It can be easily checked that $f$ and $g$ satisfies \textbf{Property A}. 
\end{example}
\noindent One may also consider the following functions for illustration of above result. They also satisfy \textbf{Property A}.\\
 $f_1(z)=1+z+e^{z}$  which would be commuting with $g_1(z)=1+z+e^{z}+ 2\pi i$;  $f_2(z)=z+\sin(z)$ which would  be commuting with $g_2(z)=z+ \sin(z)+2\pi.$
\\

\section{Some results for functions of class $\mathcal{B}$}
Now, we will prove some results when the functions belong to  the Eremenko-Lyubich class $\mathcal{B}$. 
These functions do not have a Baker domain; therefore, the escaping set does not intersect with the Fatou set. In addition,  the bungee set of the function does not intersect with a wandering domain in which the iterates tend to $\infty.$ However, functions in this class may have oscillating wandering domain. 

The next result gives a sufficient condition under which the bungee set is contained in the Julia set.
\begin{proposition}
The bungee set for functions of class $\mathcal{B}$ having no oscillatory wandering domain does not intersect with Fatou set, that is, $BU(f)\subset J(f).$ 
\end{proposition}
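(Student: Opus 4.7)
The plan is to argue by contradiction. Assume $BU(f) \cap F(f) \neq \emptyset$, pick $z_0$ in this intersection, and let $U$ be the Fatou component containing $z_0$. The first step is to invoke the structural result from \cite{osb2} quoted in the introduction: whenever a Fatou component meets the bungee set, the whole component is a wandering domain contained in $BU(f)$. Hence $U \subseteq BU(f)$ and $U$ is a wandering domain of $f$.

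The second step is to verify that such a $U$ automatically satisfies the definition of an oscillatory wandering domain given in Section~1. Pick any $w \in U$; since $w \in BU(f)$, there exist subsequences $\{n_k\}$ and $\{m_k\}$ with $f^{n_k}(w) \to \infty$ and $\{f^{m_k}(w)\}$ bounded. The family $\{f^n\}$ is normal on the Fatou component $U$, so after passing to further subsequences we may assume locally uniform convergence on $U$ in the chordal metric. The first subsequence then has $\infty$ as its limit function, while the second, being bounded at $w$, produces a holomorphic limit function $\varphi\colon U \to \mathbb{C}$. Setting $d := \varphi(w) \in \mathbb{C}$, both $\infty$ and $d$ arise as limit functions of $\{f^n\}$ on $U$, so $U$ is an oscillatory wandering domain, contradicting the hypothesis. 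Consequently $BU(f) \cap F(f) = \emptyset$, i.e., $BU(f) \subset J(f)$.

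The only delicate point is ensuring the locally uniform limit of $\{f^{m_k}\}$ on $U$ is not the constant $\infty$; this is immediate because the chosen subsequence is bounded at the specific point $w$, and for a normal family on a connected domain the subsequential limit in the spherical sense is either a holomorphic map into $\mathbb{C}$ or identically $\infty$. The class $\mathcal{B}$ hypothesis plays no further role here beyond its familiar consequence of excluding Baker domains, since the Osborne--Sixsmith theorem already forces $U$ to be wandering; what remains is essentially a direct match between that cited structural result and the definition of oscillatory wandering domain, so there is no serious technical obstacle.
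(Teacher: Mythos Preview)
Your proof is correct and follows essentially the same route as the paper: invoke the Osborne--Sixsmith result to conclude that any Fatou component meeting $BU(f)$ is a wandering domain contained in $BU(f)$, and observe this contradicts the hypothesis of no oscillatory wandering domain. The normality argument you supply in the second step is correct but superfluous here, since in this paper an oscillatory wandering domain is \emph{defined} as a Fatou domain $U\subseteq BU(f)$, with the limit-function description offered only as an equivalent reformulation.
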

\begin{proof}
 For a function $f,$ we know that if $BU(f)\cap F(f)\not = \emptyset $, then such components are  oscillatory wandering domains \cite{osb2} and these components are contained inside $BU(f)$. As $F(f)$ has no oscillatory wandering domains, it means, $BU(f)$ does not intersect with $F(f)$. In other words, $BU(f)\subset J(f).$ 
\end{proof}
\begin{remark} \noindent This also means that $BU(f) \subset J(f)$  for functions not necessarily of $\mathcal{B}$ having no oscillatory wandering domain. Also, it is easy to see that $BU(f)$ is completely invariant by the complete invariance of $I(f)$ and $K(f)$. As a result, by the minimality property of Julia set, $\overline{BU(f)}=J(f)$. It is also known that for a function of $\mathcal{B}, J(f)=\overline{I(f)}$ \cite{EL}.
\end{remark}
\noindent The escaping set of a function is in general, neither open, nor closed \cite{e1}. We have similar kind of result for the bungee set.
\begin{proposition}\label{Bug_nop_nclo}
The bungee set for functions of class $\mathcal{B}$ having no oscillatory wandering domain is neither open nor closed. 
\end{proposition}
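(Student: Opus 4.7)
The plan is to prove the two halves (not closed, not open) separately, using the facts already assembled in the preceding Proposition and Remark: namely $BU(f)\subset J(f)$ and $\overline{BU(f)}=J(f)$ for $f\in\mathcal{B}$ without oscillatory wandering domains, together with the well-known identity $J(f)=\overline{I(f)}$ for class $\mathcal{B}$ and the density of repelling periodic points in $J(f)$. Throughout we may tacitly assume $BU(f)\neq\emptyset$, otherwise the statement is vacuous.

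For the \emph{not closed} part I would argue by contradiction. Suppose $BU(f)$ is closed. Combining with $\overline{BU(f)}=J(f)$ from the remark gives $BU(f)=J(f)$. But for $f\in\mathcal{B}$, $J(f)=\overline{I(f)}$, so $J(f)\cap I(f)\neq\emptyset$ (indeed $I(f)$ itself meets $J(f)$). Since $BU(f)$ and $I(f)$ are disjoint by the dynamical partition of $\mathbb{C}$, we cannot have $BU(f)=J(f)$. Thus $BU(f)$ is not closed.

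For the \emph{not open} part I would again argue by contradiction. Suppose $BU(f)$ is open and pick $z_0\in BU(f)$. Then there is an open disk $D=B(z_0,r)\subset BU(f)\subset J(f)$. Since repelling periodic points are dense in $J(f)$, the disk $D$ contains some repelling periodic point $p$ of $f$. But such a $p$ has a bounded (in fact periodic) orbit and hence lies in $K(f)$, which is disjoint from $BU(f)$. This contradicts $D\subset BU(f)$.

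The main obstacle, and essentially the only non-trivial ingredient, is recognising that everything needed is already in place once one invokes $\overline{BU(f)}=J(f)$ from the remark and the classical facts $J(f)=\overline{I(f)}$ (for $\mathcal{B}$) together with density of repelling periodic points in $J(f)$. The argument then reduces to the two short contradictions above: a closed $BU(f)$ would swallow all escaping Julia points, and an open $BU(f)$ would contain a repelling periodic point, each possibility being immediately ruled out by the disjointness $BU(f)\cap I(f)=\emptyset=BU(f)\cap K(f)$.
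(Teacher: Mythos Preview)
Your proof is correct and in the same spirit as the paper's, but the details differ in both halves. For ``not closed'' the paper invokes complete invariance of $BU(f)$ and the minimality of $J(f)$ among closed completely invariant sets to obtain $J(f)\subset BU(f)$, then contradicts this via the periodic points contained in $J(f)$; you instead quote $\overline{BU(f)}=J(f)$ from the preceding Remark directly and contradict via escaping points in $J(f)$. For ``not open'' the paper argues that an open $BU(f)\subset J(f)$ would force $J(f)$ to have nonempty interior, which is impossible when $F(f)\neq\emptyset$; you instead use the density of repelling periodic points in $J(f)$ to place a point of $K(f)$ inside $BU(f)$. Your ``not open'' argument has the minor advantage that it does not rely on the tacit hypothesis $F(f)\neq\emptyset$ which the paper's version needs.
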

\begin{proof}
We can easily prove  the complete invariance of bungee set by noting that $BU(f)= \mathbb{C} \backslash (I(f) \cup K(f))$. Now, if we assume that bungee set is  closed then it will be a closed completely invariant set. This implies that $J(f) \subset BU(f)$ (because $J(f)$ is the smallest closed completely invariant set) which is a contradiction as $J(f)$ contains periodic points. Furthermore, if bungee set is open, it will imply that it has interior points which is also not possible (because $BU(f)\subset J(f)$ and bungee set having interior  means Julia set also has interior which is not possible for an entire function $f$ for which $F(f)\neq\emptyset$ \cite{beardon}). Hence, bungee set is neither open nor closed.
\end{proof}
The following result provides equality of bungee set, filled-in Julia set and escaping set of two functions in which one function is periodic translate of the other. Note that the two functions are non-commuting.
\begin{theorem}
Functions which are periodic translate of each other  have equal bungee sets, escaping sets and filled-in Julia sets.
\end{theorem}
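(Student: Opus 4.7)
The plan is to show by induction that the iterates of $g$ and $f$ differ by the constant $C$, and then deduce equality of all three dynamical sets from this.

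First, I will establish the key identity: if $f$ is an entire function with period $C$ (so $f(z+C) = f(z)$) and $g(z) = f(z)+C$, then
\begin{equation*}
g^n(z) = f^n(z) + C \quad \text{for all } n \geq 1.
\end{equation*}
The base case $n=1$ is just the definition. For the inductive step, assuming $g^n(z) = f^n(z) + C$, I compute
\begin{equation*}
g^{n+1}(z) = g(g^n(z)) = f(g^n(z)) + C = f(f^n(z)+C) + C = f(f^n(z)) + C = f^{n+1}(z) + C,
\end{equation*}
where the second-to-last equality uses that $C$ is a period of $f$.

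Next, from this identity I get $\bigl||g^n(z)| - |f^n(z)|\bigr| \leq |C|$ for every $n \in \mathbb{N}$ and every $z \in \mathbb{C}$, by the reverse triangle inequality. This bounded difference immediately yields the three set equalities in turn: (i) $|f^n(z)| \to \infty$ if and only if $|g^n(z)| \to \infty$, giving $I(f)=I(g)$; (ii) $\{|f^n(z)|\}_{n}$ is bounded if and only if $\{|g^n(z)|\}_{n}$ is bounded, giving $K(f)=K(g)$; (iii) since $BU(f)=\mathbb{C}\setminus(I(f)\cup K(f))$ and similarly for $g$, we obtain $BU(f)=BU(g)$ for free from the first two.

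There is no real obstacle here; the main step is simply the inductive identity $g^n = f^n + C$, which relies crucially on $C$ being a period of $f$ (otherwise $f(f^n(z)+C)$ would not simplify). The note in the statement that $f$ and $g$ are non-commuting is consistent with this: in general $f\circ g(z) = f(f(z)+C) = f^2(z)$ while $g\circ f(z) = f^2(z)+C$, so $f$ and $g$ do not commute even though their dynamics coincide in the coarse sense captured by $I$, $K$, and $BU$.
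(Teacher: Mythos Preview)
Your proof is correct and follows exactly the same approach as the paper's: the paper also asserts $g^n(z)=f^n(z)+C$ and deduces the three equalities from there, only stating the identity without writing out the induction. Your added details (the inductive step using periodicity, the reverse triangle inequality, and the complement argument for $BU$) simply fill in what the paper leaves as ``easily seen.''
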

\begin{proof}
We consider two functions $f$ and $g$ where $g$ satisfies $g(z)=f(z)+C.$  Here, $C$ is the period of $f$ (more generally, we can take $g=f^l(z)+C$, for some $l\in\N$). It can be easily seen that $g^{n}(z)=f^{n}(z)+C$. Again, it is easy to show that if a point is bounded under $f^n,$ it is also bounded under $g^n.$ Similarly, if a point  is escaping under $f^n,$ it is also escaping under $g^n$. Hence, $K(f)=K(g), I(f)=I(g)$ and  $BU(f)=BU(g)$.
\end{proof}
Such functions can be viewed as almost commuting type with a constant shift.

\section{Illustrations:}
We now illustrate  $BU(f \circ g)\subset BU(f)\cup BU(g)$ with some examples. Our first example is commuting rational functions. \\
Consider the functions $f(z)=z^2$ and $g(z)=1/z^2$. Observe that $f \circ g(z)=1/z^4$. $BU(f)= \emptyset$, $BU(g)=\{z: |z|<1 \cup |z|>1\}$ and $BU(f \circ g)=\{z:|z|<1 \cup |z|>1\}.$ Clearly, $BU(f \circ g)\subset BU(f)\cup BU(g)$.
\\
Our next example is motivated from  \cite{tw}. Let us take the case where $f= p(z)\cdot e^{h(z)}$ where $p(z) , \ h(z)$ are polynomials and $g(z)=a \cdot f(z)$ for some $a \in  \mathbb{C}\setminus \{0\}$. For simplicity, let us take
$f(z)=z\cdot e^{a^{(k+1)} \cdot z^k},\ k \in \mathbb{N}$ \&
$g(z)=a \cdot z \cdot e^{a^{(k+1)} \cdot z^k}$.\\
$f \circ g(z) =f(g(z))=g(z)\cdot e^{a^{(k+1)} \cdot g^k(z)}=aze^{a^{(k+1)}z^k}e^{a^{(2k+1)}z^ke^{ka^{(k+1)}z^k}}$\\
$g \circ f(z) =g(f(z))=a \cdot f(z)\cdot e^{a^k \cdot f^k(z)}= aze^{a^kz^k}e^{a^{2k}z^ke^{ka^kz^k}}.$ Now, for $f \circ g = g \circ f,$ we need to have $a^{2k+1}=a^k \implies a^k(a^{k+1}-1)=0,$ which means $a$ is $(k+1)$-root of unity except `1' itself because that's trivial. 
\\
Now, for the simplest case let us take: $k+1=2 \implies a =\pm 1$ Since $a=1$ is a trivial case so let us take $a=-1$, $f(z)=ze^{-z^2};\ g(z)=-ze^{-z^2}$. 
 Observe that  $z=0$ is an asymptotic value of $f$ and $g$. It is also  a fixed point of $f(z), \ g(z)$ since $f(0)=0$ and $g(0)=0$. Now for the multiplier $\lambda$, $f'(z)=e^{-z^2}(1-2z^2), f'(0)=1$ which is an indifferent fixed point. And $1^1=1$ means $0\in J(f)$ or $J(g)$ (since both of them are equal). As $\infty$ is a special case and belongs to $J(f)$ so, this proves  that a sequence which tends to  $\infty$ if it asymptotes to a finite asymptotic value which is a member of the Julia set of $f$. 
\\
Now some illustrations for the bungee set:\\
Let $f(z)=ze^{z^2};\ g(z)=-ze^{z^2}.$ This implies $f^2=ze^{z^2}e^{z^2e^{2z^2}} \ \& \ g^2=ze^{z^2}e^{z^2e^{2z^2}}.$
$f \circ g = -ze^{z^2}e^{z^2e^{2z^2}} = g \circ f$. And $(f\circ g)^2=g^4=f^4$(because as $(f \circ g)^2$ is done it does away with the negative term and rest terms are exactly same). So, $BU(f^4)=BU(g^4)=BU((f \circ g)^2)$ which implies $BU(f)=BU(g)=BU(f \circ g)$ (\textbf{NOTE:} If we have $f(z)=ze^{-z^2};\ g(z)=-ze^{-z^2}$, they also commute). We may also  take $f(z)=ze^{a^{(k+1)}z^k}$, $g(z)=aze^{a^{(k+1)}z^k}$ and $a^{(k+1)}=1$ for that matter). And this complies with the union relation with the composition.\\
Now, for the general case: Since $a$ is $k$-th root of unity, means $a^k=1$. So, $f(z)=ze^{a^k z^k}=ze^{z^k}$ and $g(z)=aze^{z^k}$. Now $g^k(z)=a^k  f^k(z)=f^k(z)$ implies $BU(f)=BU(g)$. Also, $(f \circ g)^{k}(z)=a^{k}f^{2k}(z)=f^{2k}(z)$. So, $BU(f \circ g)=BU(f)=BU(g)$. Analogously, if we take $f(z)=ze^{a^{(k+1)}z^k}$ and $\ g(z)=af(z)=aze^{a^{(k+1)}z^k}$, where again $a^{k+1}=1$, we can simplify it further as $f(z)=ze^{az^k}$ and $g(z)=aze^{az^k}$, now $g^2(z)=af(af(z)))=a^2f^2(z)$. By induction,  $g^{k+1}(z)=a^{(k+1)}f^{k+1}(z)$. Also, $f \circ g(z)= af^2(z)=g \circ f(z)$ and $(f \circ g)^2(z)=a^2f^4(z)$. Proceeding by induction, $(f \circ g)^{k+1}(z)=a^{(k+1)}f^{2(k+1)}(z)=f^{2(k+1)}(z)$. Here also, we can see that $BU(f\circ g)= BU(f)=BU(g)$.\\
Thus, functions which are of the form $f(z)=ze^{a^k z^k}$ where $a$ is $k$-th root of unity or $f(z)=ze^{a^{k+1} z^k}$, where $a$ is $(k+1)$-th root of unity and $g=af$  have the property that their bungee sets are same. Moreover, their escaping set and repelling periodic point sets are also the same. $J(f)=J(g)$ also holds true  because we are taking  $f= p(z)e^{g(z)}$ as discussed in \cite{tw}. 



\begin{thebibliography}{00}
\bibitem{baker1} I. N. Baker, Limit functions and sets of non-normality in iteration theory, Ann. Acad. Sci. Fenn. Ser. A. I. Math. \textbf{467} (1970), 1-11. 
\bibitem{baker2} I. N. Baker, Wandering domains in the iteration of entire functions, Proc. London Math. Soc. \textbf{49} (1984), 563-576.
\bibitem{beardon} A. F. Beardon, \emph{Iteration of rational functions}, Springer Verlag, (1991).
\bibitem{bergweiler} W. Bergweiler, Iteration of meromorphic functions, Bull. Amer. Math. Soc. \textbf{29} (1993), 151-188.
\bibitem{e1} A. E. Eremenko, On the iteration of entire functions, Ergodic Theory and Dynamical Systems, Banach Center Publications \textbf{23}, Polish Scientific Publishers, Warsaw, (1989), 339-345.
\bibitem{EL} A. E. Eremenko and M. Yu. Lyubich, Dynamical properties of some classes of entire functions, Ann. Inst. Fourier, Grenoble, \textbf{42} (1992), 989-1020.
\bibitem{F2}  P. Fatou, Sur l'it\'eration des fonctions transcendantes Enti\`eres, Acta Math. {\bf 47} (1926), no.~4, 337-370. 
\bibitem{martin} A. Hinkkanen and G. J. Martin, The dynamics of semigroups of rational functions I, Proc. London Math. Soc. (3) \textbf{73} (1996), 358-384.

\bibitem{Hua} X. H. Hua, C. C. Yang, \emph{Dynamics of transcendental functions}, Gordon and Breach Science Pub. (1998).
\bibitem{hua1} X. Hua and X. Wang,  Dynamics of permutable transcendental entire functions, Acta Math. Vietnam. \textbf{27} (2002), 301-306.
\bibitem{dinesh1} D. Kumar and S. Kumar, The dynamics of semigroups of transcendental entire functions I,  Indian J. Pure Appl. Math. \textbf{46} (2015), 11-24.
\bibitem{dk} D. Kumar, On escaping sets of some families of entire functions and dynamics of composite entire functions, Math. Student, \textbf{84} (2015), No. 1-2, 87-94.
\bibitem{dinesh3} D. Kumar and S. Kumar, The dynamics of semigroups of transcendental entire functions II,  Indian J. Pure Appl. Math. \textbf{47} (2015), 409-423.
\bibitem{d1} R. Kaur and D. Kumar, Results on escaping set of an entire function and its composition,  Indian J. Pure Appl. Math. \textbf{52}(2021), 79-86.
\bibitem{tw} T. W. Ng, Permutable entire functions and their Julia sets, Math. Proc. Camb. Phil. Soc.\textbf{131} (2001), 129-138.
\bibitem{osb2} J. W. Osborne and D. J. Sixsmith, On the set where the iterates of an entire function are neither escaping nor bounded, Ann. Acad. Sci. Fenn. Math.  \textbf{41} (2016), 561-578.

\bibitem{marti2020wandering} D. M. Pete  and M. Shishikura, Wandering domains for entire functions of finite order in the Eremenko-Lyubich class, Proc. London Math. Soc. \textbf{120}  (2020), 155-191. 

\bibitem{poon1} K. K. Poon, Fatou-Julia theory on transcendental semigroups, Bull. Austral. Math. Soc. \textbf{58} (1998), 403-410.

\bibitem{rs1}  P. J. Rippon and G. M. Stallard, On questions of Fatou and Eremenko, Proc. Amer.
Math. Soc. \textbf{133} (2005), no. 4, 1119-1126.


\bibitem{sch} D. Schleicher, Dynamics of entire functions,  Lecture Notes in Mathematics  ((LNMCIME,volume 1998)) (2010), 295-339.

\bibitem{aps} A. P. Singh, On bungee sets of composite transcendental entire functions,  arXiv:2006.00208v1[math.CV] (2020) , pp. 1-7.
\end{thebibliography}
 \end{document}